\numberwithin{equation}{section}
\newtheorem{thm}[equation]{Theorem}
\newtheorem{lem}[equation]{Lemma}
\theoremstyle{definition}
\theoremstyle{remark}
\newtheorem{rem}[equation]{Remark}
\theoremstyle{remark}
\DeclareMathOperator{\Hom}{Hom}
\DeclareMathOperator{\Lie}{Lie}
\DeclareMathOperator{\Ad}{Ad}
\DeclareMathOperator{\ev}{ev}
\newcommand{\deq}{\overset{\textup{def}}{=}}
\newcommand{\F}{\mathbb{F}}
\newcommand{\N}{\mathbb{N}}
\newcommand{\C}{\mathbf{C}}
\newcommand{\D}{\mathbf{D}}
\subjclass[2010]{20H10, 20G40)}
\keywords{triangle groups, algebraic groups, representation varieties, subgroup structure}
\date{\today}
\title[Rigid representations of triangle groups]
{Rigid representations of triangle groups}
\author[A.J.\ Litterick]{Alastair Litterick}
\address%
{A.J.\ Litterick,
Fakult\"{a}t f\"{u}r Mathematik,
Ruhr-Universit\"{a}t Bochum,
Universit\"{a}tsstra{\ss}e 150,
D-44780 Bochum,
Germany
\and
Fakult\"{a}t f\"{u}r Mathematik,
Universit\"{a}t Bielefeld,
Postfach 100131,
D-33501 Bielefeld,
Germany}
\email{ajlitterick@gmail.com}
\thanks{The first author is supported by the Alexander von Humboldt Foundation.}
\author[B.\ Martin]{Benjamin Martin}
\address
{Department of Mathematics,
University of Aberdeen,
King's College,
Fraser Noble Building,
Aberdeen AB24 3UE,
United Kingdom}
\email{B.Martin@abdn.ac.uk}
\begin{document}

\begin{abstract}
We prove a generalization of a conjecture of C.\ Marion on generation properties of finite groups of Lie type, by considering geometric properties of an appropriate representation variety and associated tangent spaces.
\end{abstract}

\maketitle

\section{Introduction}

Let $G$ be a linear algebraic group over an algebraically closed field $k$ of characteristic $p\geq 0$.  Given $d\in \N$, let $j_d$ be the largest dimension of a conjugacy class of elements of $G$ of order $d$.  For $a,b$ and $c$ positive integers, let $T(a,b,c)$ be the triangle group given by $T(a,b,c)= \langle x,y,z\mid x^a= y^b= z^c=xyz= 1\rangle$. In this note we prove a conjecture of Marion (cf.\ \cite{Marion2010}).

\begin{thm}
\label{thm:triangle}
 Suppose that $G$ is simple and defined over a finite field $\F_{q}$, and suppose $j_a + j_b + j_c = 2\dim(G)$.  Then there are only finitely many $r\in \N$ such that the finite group of Lie type $G(q^r)$ is a quotient of $T(a,b,c)$.
\end{thm}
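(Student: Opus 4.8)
We may assume $1/a + 1/b + 1/c < 1$: otherwise $T(a,b,c)$ is finite or virtually abelian, so its finite quotients are soluble and cannot be isomorphic to $G(q^r)$ once $q^r$ is large. The plan is then to analyse the representation variety $R := \Hom(T(a,b,c),G)$, which we identify (via $z = (xy)^{-1}$) with the closed $G$-stable subvariety $\{(u,v) \in G \times G : u^a = v^b = (uv)^c = 1\}$ of $G \times G$, with $G$ acting by simultaneous conjugation. Let $\mathcal{R} \subseteq R$ be the subset of those $\rho$ whose image $\rho(T(a,b,c))$ is Zariski-dense in $G$; this is open, and for $\rho \in \mathcal{R}$ we have $C_G(\rho(T(a,b,c))) = C_G(G) = Z(G)$, which is finite, so $\dim(G \cdot \rho) = \dim G$.

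\textbf{Main step: rigidity.} The crux is to prove $\dim_\rho R \le \dim G$ for every $\rho \in \mathcal{R}$. Granting this, $\overline{G \cdot \rho}$ is an irreducible component of $R$ of dimension $\dim G$, the orbit $G \cdot \rho$ is open in it, and --- since the components of $R$ are $G$-stable ($G$ being connected) --- $G \cdot \rho$ meets no other component, hence is open in $R$; as $R$ has finitely many irreducible components, $\mathcal{R}$ is a finite disjoint union $\mathcal{O}_1 \sqcup \cdots \sqcup \mathcal{O}_n$ of $G$-conjugacy orbits. For the dimension bound, the Zariski tangent space $T_\rho R$ is the cocycle space $Z^1(T(a,b,c),\mathfrak{g})$, where $\mathfrak{g} = \Lie(G)$ is a $T(a,b,c)$-module via $\Ad \circ \rho$; since $\dim_\rho R \le \dim Z^1 = \dim G - \dim H^0 + \dim H^1$, it is enough to show $H^0(T(a,b,c),\mathfrak{g}) = 0$ and $H^1(T(a,b,c),\mathfrak{g}) = 0$. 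The first holds because $\mathfrak{g}^{\rho(T(a,b,c))} = \mathfrak{g}^G = \Lie(Z(G)) = 0$. For the second, the Euler-characteristic formula for the cocompact Fuchsian group $T(a,b,c)$ gives
\[
\dim H^1 - \dim H^0 - \dim H^2 \;=\; \dim G - \sum_{i=1}^{3} \dim \mathfrak{g}^{\rho(w_i)},
\]
where $w_1, w_2, w_3$ stand for $x, y, z$; combining $H^0 = 0$, the self-duality of $\mathfrak{g}$ (whence $H^2(T(a,b,c),\mathfrak{g}) \cong H^0(T(a,b,c),\mathfrak{g}^*) = 0$ by Poincar\'e duality), and $\dim \mathfrak{g}^{\rho(w_i)} \ge \dim G - \dim\bigl(\rho(w_i)^G\bigr) \ge \dim G - j_a$ (respectively $j_b$, $j_c$) yields $\dim H^1 \le j_a + j_b + j_c - 2\dim G = 0$, so $H^1 = 0$.

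\textbf{The main obstacle.} Everything above is transparent when $\rho(x), \rho(y), \rho(z)$ are semisimple of orders exactly $a, b, c$ and $p$ is good for $G$: then $\dim \mathfrak{g}^g = \dim C_G(g)$ for these elements, $\mathfrak{g}$ carries a nondegenerate invariant bilinear form, and $\mathfrak{g}^G = 0$. The work lies in the finitely many characteristics where this can fail. If $p \mid abc$ the relevant generators need not be semisimple, and for bad unipotent or mixed elements $\dim \mathfrak{g}^g$ can strictly exceed $\dim C_G(g)$, so the passage between the hypothesis $j_a + j_b + j_c = 2\dim G$ and the Lie-algebra invariants $\dim \mathfrak{g}^{\rho(w_i)}$ must be done carefully; moreover $\mathfrak{g}$ may be non-self-dual or contain a trivial submodule (for instance $\mathrm{SL}_p$ in characteristic $p$, or classical groups in characteristic $2$), threatening the vanishing of $H^0$ and $H^2$ and possibly forcing a correction term in the Euler-characteristic formula. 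One must also handle representations in which $\rho(x), \rho(y), \rho(z)$ have orders properly dividing $a, b, c$, which relies on $j_{a'} \le j_a$ whenever $a' \mid a$ (and likewise for $b, c$). I expect these case analyses, rather than the geometric idea, to consume the bulk of the argument.

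\textbf{Conclusion.} Suppose, for contradiction, that $G(q^r)$ is a quotient of $T(a,b,c)$ for infinitely many $r \in \N$. Composing a surjection $T(a,b,c) \twoheadrightarrow G(q^r)$ with the inclusion $G(q^r) \hookrightarrow G$ gives a homomorphism $\rho_r \colon T(a,b,c) \to G$ with $\rho_r(T(a,b,c)) = G(q^r)$; since $G$ is connected, $G(q^r)$ is Zariski-dense in $G$, so $\rho_r \in \mathcal{R}$. By the pigeonhole principle, infinitely many of the $\rho_r$ lie in one orbit $\mathcal{O}_i = G \cdot \rho_0$ from the rigidity step, and for each such $r$ the image $G(q^r) = \rho_r(T(a,b,c))$ is a $G$-conjugate of the fixed subgroup $\rho_0(T(a,b,c))$ of $G$. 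This is absurd, since $|G(q^r)| \to \infty$ as $r \to \infty$. Hence only finitely many $r$ can occur, which proves Theorem~\ref{thm:triangle}.
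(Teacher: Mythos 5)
The geometric skeleton of your argument (rigidity of representations with dense image forces finitely many $G$-orbits, hence finitely many images up to conjugacy, contradicting $|G(q^r)|\to\infty$) is the same as the paper's. But your main step is not actually proved: it is precisely the cohomological argument of Larsen--Lubotzky--Marion that the paper's introductory Remark identifies as working only \emph{generically}. Concretely, the vanishing of $H^0$, the self-duality of $\mathfrak{g}$ together with Poincar\'e duality giving $H^2=0$, and the three-term Euler-characteristic identity all genuinely fail for certain $G$ and $p$: for instance $\mathfrak{sl}_p$ in characteristic $p$ has a trivial submodule, so $H^0(\Gamma,\mathfrak{g})\neq 0$ even when $\rho(\Gamma)$ is Zariski-dense; and when $p\mid abc$ the finite cyclic subgroups $\langle x_i\rangle$ have nonvanishing higher cohomology, so the Euler-characteristic formula you invoke is not valid as stated. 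You correctly flag these as ``the main obstacle'' and then defer them, but they are the entire content of the theorem beyond what was already known; as written your proof covers only the restricted characteristics, and there is no indication of how the deferred case analysis would actually be carried out.

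The paper circumvents $H^1$ entirely. It fixes the conjugacy classes $C_1,\dots,C_n$, considers the fibre $\C_0$ of the product map $f\colon C_1\times\cdots\times C_n\to G$ over $1$, and computes that the image of $df_\rho$ is the span of the sets $\{X-c\cdot X : X\in\mathfrak{g}\}$ as $c$ runs over (conjugates of) the generators, i.e.\ $D_H(\mathfrak{g})$ for $H=\overline{\rho(\Gamma)}$ by Lemma~\ref{lem:coinvts}. Surjectivity of $df_\rho$ --- hence smoothness of $f$ at $\rho$ and the dimension count $\dim\D=\dim\C-\dim G\le\dim G$ for the component $\D$ through $\rho$ --- therefore needs only the vanishing of the coinvariants $\mathfrak{g}_H=0$, with no hypothesis on $p$ relative to $a,b,c$. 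The remaining work, for which your proposal has no counterpart, is to show that all but finitely many of the subgroups $G^F$ are $G$-irreducible and satisfy $\mathfrak{g}_{G^F}=0$; this uses restriction theorems of Humphreys and of Cline--Parshall--Scott comparing $G$-module and $G^F$-module structure of $\mathfrak{g}$. Without either that input or a complete treatment of your ``main obstacle'' cases, the argument does not establish the theorem.
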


In fact our results are more general. For integers $a_1,a_2,\ldots,a_n \in \N$ let
\[ T(a_1,a_2,\ldots,a_n) \deq \left< x_1,\ldots,x_n \mid x_1^{a_1} = \ldots = x_{n}^{a_n} = x_1 x_2 \ldots x_n = 1 \right>. \]
Recall also that if an algebraic group $G$ is defined over a finite field $\F_q$, then a \emph{Steinberg endomorphism} of $G$ is a homomorphism $F \colon G \to G$ of algebraic groups such that some power of $F$ is the Frobenius endomorphism corresponding to an $\F_{q^t}$-structure of $G$ $(t \ge 1)$. In particular, the fixed-point subgroup $G^{F}$ is finite whenever $F$ is a Steinberg endomorphism. Let $\mathcal{S}_{G}$ be the set of isomorphism classes of groups of the form $G^{F}$, as $F$ varies over Steinberg endomorphisms of $G$.


\begin{thm} \label{thm:triangle_and_more}
Suppose that $G$ is connected and reductive, and let $a_1,a_2,\ldots,a_n \in \N$ such that $\sum_{i = 1}^{n} j_{a_i} = 2 (\dim G - \dim Z(G))$. Then only finitely many members of $\mathcal{S}_{G}$ are a quotient of the group $T(a_1,\ldots,a_n)$.
\end{thm}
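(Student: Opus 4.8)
The plan is to study the \emph{representation scheme} $R \deq \Hom(T,G)$, where $T \deq T(a_1,\ldots,a_n)$. Concretely $R$ is the closed subscheme of $G^n$ consisting of those tuples $(g_1,\ldots,g_n)$ with $g_i^{a_i} = 1$ for all $i$ and $g_1 g_2 \cdots g_n = 1$: a homomorphism $T \to G$ is precisely such a tuple, via the images of the standard generators. Since $G$ is affine of finite type, $R$ is a closed subscheme of $G^n$, hence of finite type over $k$, and so has only finitely many irreducible components; moreover $R$ is stable under post-composition with any Steinberg endomorphism $F$ of $G$. Now suppose $G^F$ is a quotient of $T$; fix a surjection $\phi\colon T \twoheadrightarrow G^F$ and let $\rho = \rho_F$ be the composite of $\phi$ with the inclusion $G^F \hookrightarrow G$, viewed as an $F$-fixed point of $R$. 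By a theorem of Steinberg, $G^F = \rho(T)$ is Zariski-dense in the connected group $G$, so $C_G(\rho(T)) = C_G(G) = Z(G)$, and hence the orbit $G\cdot\rho$ of $\rho$ under conjugation has dimension $\dim G - \dim Z(G) =: D$.

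The crux is a \emph{rigidity} assertion: $G\cdot\rho$ is an irreducible component of $R$. Since $G\cdot\rho$ is irreducible and locally closed of dimension $D$, and $\dim_\rho R \ge \dim(G\cdot\rho) = D$, it suffices to prove $\dim_\rho R \le D$. The tangent space of $R$ at $\rho$ is the space $Z^1(T,\mathfrak g)$ of $1$-cocycles for the action of $T$ on $\mathfrak g = \Lie G$ through $\Ad\circ\rho$, so it is enough to show
\[ \dim Z^1(T,\mathfrak g) \le \dim G - \dim Z(G). \]
One computes $Z^1$ from the presentation of $T$ after eliminating one generator: a cocycle's value at $x_i$ is forced into $\ker\bigl(1 + \Ad\rho(x_i) + \cdots + \Ad\rho(x_i)^{a_i-1}\bigr)$, a subspace of codimension at most $\dim G - \dim C_G(\rho(x_i))$, and the final relation contributes one further constraint; using that $\rho(T)$ is Zariski-dense in $G$ (in particular $G$-completely reducible), so that the relevant linear-algebra data lie in suitably general position, one obtains
\[ \dim Z^1(T,\mathfrak g) \le \sum_{i=1}^{n}\bigl(\dim G - \dim C_G(\rho(x_i))\bigr) - \dim G + \dim \mathfrak g^{G}. \]
The $i$-th summand is the dimension of the conjugacy class of $\rho(x_i)$, an element of order dividing $a_i$, and hence is at most $j_{a_i}$ (reducing if necessary to the case of order exactly $a_i$, or invoking monotonicity of $j$ under divisibility). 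By the hypothesis $\sum_i j_{a_i} = 2(\dim G - \dim Z(G))$ the right-hand side is at most $\dim G - 2\dim Z(G) + \dim\mathfrak g^{G}$, and this equals $D$ once one knows $\dim\mathfrak g^{G} = \dim Z(G)$, i.e.\ that the centre of $G$ is smooth. I expect the main obstacle to lie exactly here: the cocycle computation — in particular the control of $\ker\bigl(1 + \Ad\rho(x_i) + \cdots\bigr)$, of the centralizers $C_G(\rho(x_i))$, and of $Z(G)$ — is delicate in small characteristic, and this is where the hypotheses, together with the Zariski-density and complete reducibility of $\rho(T)$, must be exploited with care.

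Granting rigidity, the theorem follows quickly. If $\rho,\rho'$ are two representations $T \to G$ with Zariski-dense image, then $G\cdot\rho$ and $G\cdot\rho'$ are dense open subsets of the irreducible components of $R$ containing them; if those components coincide, the two orbits meet, so $\rho$ and $\rho'$ are $G$-conjugate and $\rho(T) \iso \rho'(T)$ as abstract groups. As $R$ has finitely many components, the images $\rho(T)$ of Zariski-dense representations of $T$ in $G$ fall into finitely many $G$-conjugacy classes, hence into finitely many isomorphism types. But whenever $G^F \in \mathcal{S}_{G}$ is a quotient of $T$ we have, as above, $G^F \iso \rho_F(T)$ for such a representation $\rho_F$; so only finitely many isomorphism types among the members of $\mathcal{S}_{G}$ can be quotients of $T$, which is the assertion of the theorem.
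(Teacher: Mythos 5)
Your overall strategy---rigidity of the relevant representations plus finiteness of the number of irreducible components---is the same as the paper's, and your final paragraph (granting rigidity) is essentially the paper's concluding argument. But the proof of rigidity, which is the entire content of the theorem, has a genuine gap, and two of your supporting claims are false as stated. First, $G^F$ is a \emph{finite} subgroup of the irreducible positive-dimensional variety $G$, so it is never Zariski-dense in $G$; the assertions $C_G(\rho(T)) = Z(G)$ and $\dim (G\cdot\rho) = \dim G - \dim Z(G)$ therefore do not follow from density and must be argued differently (the paper uses $G$-irreducibility of $G^F$, which gives a finite centraliser, and even this holds only for all but finitely many $F$ and requires proof). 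Second, and more seriously, the inequality $\dim Z^1(T,\mathfrak{g}) \le \sum_i\bigl(\dim G - \dim C_G(\rho(x_i))\bigr) - \dim G + \dim\mathfrak{g}^G$ is precisely the classical Weil computation, and it is exactly what fails in the stated generality: when $p \mid a_i$ the kernel of $1 + \Ad\rho(x_i) + \cdots + \Ad\rho(x_i)^{a_i-1}$ can be strictly larger than a complement to $\mathfrak{c}_{\mathfrak{g}}(\rho(x_i))$; the centraliser schemes and $Z(G)$ need not be smooth, so $\dim\mathfrak{g}^{G}$ can exceed $\dim Z(G)$; and the ``one further constraint'' from the product relation cutting the dimension down by $\dim G - \dim\mathfrak{g}^G$ is a surjectivity statement amounting to the cohomological vanishing that would have to be proved. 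You flag this as ``the main obstacle,'' but it is not an obstacle to be smoothed over: it is the theorem.

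The paper circumvents the Weil computation entirely. Instead of bounding $Z^1$ for the full representation scheme, it fixes the conjugacy classes $C_i$ and works inside $\C = C_1\times\cdots\times C_n$, a smooth variety of dimension $\sum\dim C_i$ by construction, so the order relations contribute no tangent-space pathology; rigidity then reduces to surjectivity of the differential of the product map $f\colon\C\to G$ at $\rho$, and the image of $df_\rho$ is computed to be exactly $D_H(\mathfrak{g})$ for $H = \overline{\rho(\Gamma)}$. Thus the hypothesis needed is the vanishing of the \emph{coinvariants} $\mathfrak{g}_H$, and a substantial part of the paper's proof is devoted to verifying---via restriction theorems for modules over finite groups of Lie type and the Cline--Parshall--Scott and Liebeck--Seitz results, after reducing to the semisimple simply-connected case---that $\mathfrak{g}_{G^F} = 0$ and $G^F$ is $G$-irreducible for all but finitely many Steinberg endomorphisms $F$. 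Your proposal contains no substitute for any of this, so the rigidity claim, and with it the theorem, remains unproved.
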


\begin{rem}
Theorem~\ref{thm:triangle} was proved `generically' in \cite[Theorem~1.7]{Larsen2014}, under the hypothesis that the defining characteristic of $G$ is coprime to the integers $a$, $b$, $c$, and to the determinant of the Cartan matrix of $G$. Using this, the authors show that a certain cohomology group vanishes, which implies that a certain map of tangent spaces is surjective. Our results follow Theorem~\ref{thm:general} below, which is in a similar vein but with weaker hypotheses. Indeed, one can show that under the restriction on the characteristic given in \cite{Larsen2014}, each appropriate generating set for a finite group $H = G(q^r)$ will satisfy the hypothesis of Theorem~\ref{thm:general} below.
\end{rem}

\section{Representation varieties}

We start with a few preliminaries. For a linear algebraic group $G$ we write $\mathfrak{g}$ for the Lie algebra $\Lie(G)$ considered as the adjoint $G$-module, and $g\cdot X$ for $\Ad_g(X)$ if $g\in G$ and $X\in \mathfrak{g}$. Given $H \leq G$, we set $D_H(\mathfrak{g})= \{X- h\cdot X\mid h\in H\}$ and let $\mathfrak{g}_H = \mathfrak{g}/D_H(\mathfrak{g})$, the space of coinvariants of $H$ on $\mathfrak{g}$. The Zariski closure of $H$ is denoted $\overline{H}$.

Let $\Gamma$ be a finitely generated group. Define $\Hom(\Gamma, G)$ to be the set of homomorphisms from $\Gamma$ to $G$.  It is well-known that $\Hom(\Gamma, G)$ is an affine algebraic variety over $k$, called the \emph{representation variety}.  To see this concretely, let $\gamma_1,\ldots, \gamma_n$ be a set of generators for $\Gamma$. Then the map $\rho \mapsto (\rho(\gamma_1),\ldots,\rho(\gamma_n))$ embeds $\Hom(\Gamma, G)$ into $G^{n}$ as the closed subvariety of $n$-tuples satisfying the defining relations of $\Gamma$.  In particular, if $F_n$ denotes the free group on $n$ generators then we may (and shall) identify $\Hom(F_n, G)$ with $G^n$ via a choice of free generators.  If $\gamma\in \Gamma$ then we have a morphism of varieties $\ev_\gamma\colon \Hom(\Gamma, G)\to G$ given by $\ev_\gamma(\rho)= \rho(\gamma)$.

If $C_1,\ldots, C_n$ are conjugacy classes of $G$, let $\C = C_1 \times \cdots \times C_n$. Define $f\colon \C \to G$ by $f(c_1,\ldots, c_n)= c_1\ldots c_n$ and set $\C_0= \{(c_1,\ldots, c_n)\in \C \mid c_1\ldots c_n= 1\}= f^{-1}(1)$. The group $G$ acts on $G^{n}$ by conjugation, and this action stabilises both $\Hom(\Gamma, G)$ and $\C_0$.

An element $\rho = (g_1\ldots,g_n) \in \Hom(\Gamma, G)$ determines an $n$-tuple of classes $\C = C_1 \times \cdots \times C_n$ (where $g_i \in C_i$ for each $i$), and we may regard $\C$ as a (quasi-affine) subvariety of $G^n$. If $\rho \in \C_0$ then we say that $\rho$ is \emph{rigid} if the orbit $G\cdot \rho$ is an open subset of $\C_0$. It is immediate that $G\cdot \rho$ is open if and only if $G^{\circ}\cdot \rho$ is open. Note that if the generators of $\Gamma$ multiply to $1$, then $\rho \in \C_0$ for all $\rho$.


Recall that a subgroup $H$ of a connected reductive group $G$ is said to be \emph{$G$-irreducible} ($G$-ir) if $H$ is not contained in any proper parabolic subgroup of $G$; in this case, if $H$ is closed then $H^{\circ}$ is reductive and the centraliser $C_G(H)$ is finite. For a subset $X \le \Hom(\Gamma, G)$, write $X_{\rm ir}$ for the set of homomorphisms $\rho\in X$ such that $\overline{\rho(\Gamma)}$ is $G$-ir.

\begin{thm}
\label{thm:general}
Suppose that $G$ is semisimple and that there exists $\rho = (c_1,\ldots,c_n) \in (\C_{0})_{\rm ir}$, such that the closed subgroup $H= \overline{\langle c_1,\ldots, c_n\rangle}$ satisfies $\mathfrak{g}_H = 0$. Then the following are equivalent.\smallskip\\
 (a) $\rho$ is rigid.\smallskip\\
 (b) $\dim(C_1)+\cdots + \dim(C_n)= 2\dim(G)$.\smallskip\\
 (c) $\dim(C_1)+\cdots + \dim(C_n)\leq 2\dim(G)$.
\end{thm}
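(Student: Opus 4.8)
The plan is to derive all three equivalences from the single transversality fact that the differential $df_\rho\colon T_\rho\C\to T_1G=\mathfrak{g}$ of the product map $f\colon\C\to G$ is surjective. Granting this, the rest is dimension counting. The variety $\C=C_1\times\cdots\times C_n$ is smooth and irreducible of dimension $\sum_i\dim C_i$ (a product of group orbits), so if $df_\rho$ is onto then the fibre $\C_0=f^{-1}(1)$, computed scheme-theoretically, is smooth at $\rho$ with $\dim_\rho\C_0=\sum_i\dim C_i-\dim G$; this matches the general lower bound $\dim_\rho\C_0\ge\dim\C-\dim G$ from the fibre-dimension theorem for a morphism out of the irreducible variety $\C$, so $\C_0$ is genuinely smooth there of the expected dimension. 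On the other hand, $G$-irreducibility of $H$ makes $C_G(\rho)=C_G(H)$ finite, so the orbit $G\cdot\rho\subseteq\C_0$ has dimension exactly $\dim G$. Since a $G$-stable orbit contained in a variety that is smooth at a point of the orbit is open there precisely when its dimension equals the local dimension of the ambient variety, $\rho$ is rigid iff $\dim G=\sum_i\dim C_i-\dim G$; this is (a)$\Leftrightarrow$(b). Finally (b)$\Rightarrow$(c) is trivial, and (c)$\Rightarrow$(b) holds because $\sum_i\dim C_i-\dim G=\dim_\rho\C_0\ge\dim G\cdot\rho=\dim G$ already forces $\sum_i\dim C_i\ge 2\dim G$.

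Thus the work is in the surjectivity of $df_\rho$, and this is exactly where $\mathfrak{g}_H=0$ enters. Put $d_i=c_1c_2\cdots c_{i-1}$, so $d_1=d_{n+1}=1$. Trivialising tangent spaces by right translation, a direct computation of the product map gives
\[ df_\rho(X_1,\ldots,X_n)=\sum_{i=1}^{n}\Ad_{d_i}X_i, \qquad X_i\in T_{c_i}C_i\subseteq\mathfrak{g}. \]
As $T_{c_i}C_i$ contains the image $(1-\Ad_{c_i})\mathfrak{g}$ of the differential of the orbit map $g\mapsto gc_ig^{-1}$, it suffices to show $\sum_i\Ad_{d_i}\big((1-\Ad_{c_i})\mathfrak{g}\big)=\mathfrak{g}$, which I would prove dually. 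Suppose $\lambda\in\mathfrak{g}^*$ annihilates each of these subspaces; then $\lambda\circ\Ad_{d_i}=\lambda\circ\Ad_{d_i}\circ\Ad_{c_i}=\lambda\circ\Ad_{d_{i+1}}$ for every $i$, and chaining these equalities from $d_1=1$ to $d_{n+1}=1$ yields $\lambda\circ\Ad_{d_i}=\lambda$ for all $i$. Hence $\lambda$ is fixed by the coadjoint action of each $d_i$, so by $\langle c_1,\ldots,c_n\rangle=\langle d_1,\ldots,d_n\rangle$, and therefore---its fixator being Zariski closed---by $H=\overline{\langle c_1,\ldots,c_n\rangle}$. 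But $(\mathfrak{g}^*)^H$ is canonically the linear dual of the coinvariant space $\mathfrak{g}_H$, which is zero by hypothesis, so $\lambda=0$.

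The main care, I expect, is needed not in this linear-algebra core but in the scheme/variety bookkeeping in positive characteristic: one must take the tangent space of the scheme-theoretic fibre to get $\dim_\rho\C_0\le\dim T_\rho\C-\dim\operatorname{Im}df_\rho$, then play this off against the reduced lower bound to conclude smoothness and the expected dimension simultaneously, and one must keep the ``open orbit'' step honest at the level of irreducible components (a smooth point lying on a unique one). A further pitfall is that in bad characteristic the orbit map $G\to G\cdot\rho$ may be inseparable and $T_{c_i}C_i$ may strictly contain $(1-\Ad_{c_i})\mathfrak{g}$; the argument is arranged to be insensitive to this, using only the inclusion $(1-\Ad_{c_i})\mathfrak{g}\subseteq T_{c_i}C_i$ and the finiteness---not smoothness---of $C_G(H)$. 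It is also worth stressing that the correct hypothesis is $\mathfrak{g}_H=0$ (coinvariants), not $\mathfrak{c}_{\mathfrak{g}}(H)=0$: the former is the vanishing of the genuine obstruction space and need not agree with the latter when $\mathfrak{g}$ is not self-dual as a $G$-module.
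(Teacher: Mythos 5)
Your argument is correct and follows essentially the same route as the paper: compute $df_\rho$ in right-trivialised coordinates, use $\mathfrak{g}_H=0$ to get surjectivity of $df_\rho$, deduce that $\C_0$ is smooth of the expected dimension $\dim\C-\dim G$ at $\rho$, and compare with the $\dim G$-dimensional orbit $G\cdot\rho$ (finite centraliser by $G$-irreducibility). The only real difference is cosmetic: you prove surjectivity by a dual (annihilator) argument via $(\mathfrak{g}^*)^H=(\mathfrak{g}_H)^*=0$, whereas the paper composes $f$ with the orbit parametrisation and invokes Lemma~\ref{lem:coinvts} to identify the image of $df_\rho$ with $D_H(\mathfrak{g})$ directly --- these are equivalent telescoping computations.
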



We postpone the proof of Theorem~\ref{thm:general} to the next section, and now use it to prove Theorem~\ref{thm:triangle_and_more} and hence Marion's conjecture (Theorem~\ref{thm:triangle}) as follows. We first reduce to the case that $Z(G)$ is finite, i.e.\ $G$ is semisimple. We then replace $G$ by its simply-connected cover. Neither of these operations changes the quantity $\dim G - \dim Z(G)$, nor the dimension of the conjugacy class of a fixed element. In order to make use of these reductions, we prove a slightly stronger result than Theorem~\ref{thm:triangle_and_more} for simply-connected semisimple groups.

For our first lemma, note that if $H$ is a closed, characteristic subgroup of a connected reductive group $G$, then a Frobenius morphism of $G$ induces a Frobenius morphism of $G/H$, and if $H$ is connected then $G^{F}/H^{F} \to (G/H)^{F}$ is an isomorphism \cite[Proposition 23.2]{Malle2011}.

\begin{lem}
Let $G$ be connected and reductive, let $Z = Z(G)^{\circ}$ be the identity component of the centre of $G$, and write $\underline{G} = G/Z$. If infinitely many distinct members of $\mathcal{S}_{G}$ are images of some $T(a_1,\ldots,a_n)$, then so are infinitely many members of $\mathcal{S}_{\underline{G}}$.
\end{lem}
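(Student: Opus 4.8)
The plan is to transport a surjection $T \twoheadrightarrow G^{F}$ along the quotient map $G \to \underline{G} = G/Z$, and then to check that an infinite family of pairwise non-isomorphic finite groups $G^{F}$ cannot collapse to a \emph{finite} family of quotients of $\underline{G}$.

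Fix $a_{1},\ldots,a_{n}$ and put $T = T(a_{1},\ldots,a_{n})$; by hypothesis there is an infinite set $\mathcal{F}$ of Steinberg endomorphisms of $G$ such that the groups $G^{F}$ $(F \in \mathcal{F})$ are pairwise non-isomorphic and each is a quotient of $T$. Let $F$ be an arbitrary Steinberg endomorphism of $G$. Since $F$ is a bijective endomorphism, it is an automorphism of $G$ as an abstract group, so $F(Z(G)) = Z(G)$; being a morphism of varieties, $F$ also permutes connected components, hence stabilises $Z = Z(G)^{\circ}$. Thus $F$ descends to an endomorphism $\overline{F}$ of $\underline{G}$, and if $F^{m}$ is a Frobenius morphism then $\overline{F}^{\,m} = \overline{F^{m}}$ is a Frobenius morphism of $\underline{G}$ by \cite[Proposition~23.2]{Malle2011}; so $\overline{F}$ is a Steinberg endomorphism, with the same parameter $q(F)$ (i.e.\ $F^{m}$ and $\overline{F}^{\,m}$ correspond to an $\F_{q(F)^{m}}$-structure). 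In particular $\underline{G}^{\overline{F}} \in \mathcal{S}_{\underline{G}}$.

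Since $Z$ is closed, connected and $F$-stable, the Lang--Steinberg theorem applied to $Z$ gives $H^{1}(F,Z) = 1$, whence $\underline{G}^{\overline{F}} \cong G^{F}/Z^{F}$ (cf.\ \cite[Proposition~23.2]{Malle2011}). Hence if $G^{F}$ is a quotient of $T$, so is $\underline{G}^{\overline{F}}$, via the composite $T \twoheadrightarrow G^{F} \twoheadrightarrow G^{F}/Z^{F}$. Applying this for each $F \in \mathcal{F}$, every $\underline{G}^{\overline{F}}$ $(F \in \mathcal{F})$ is a quotient of $T$ lying in $\mathcal{S}_{\underline{G}}$, and it remains to see that infinitely many of these are pairwise non-isomorphic. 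If $\underline{G}$ is trivial, then $G$ is a torus, every $G^{F}$ is abelian, and such a quotient of $T$ is a quotient of the abelianisation $T^{\mathrm{ab}}$; but $T^{\mathrm{ab}}$ is finite, since the relations $x_{i}^{a_{i}} = 1$ realise it as a quotient of $\bigoplus_{i=1}^{n}\mathbb{Z}/a_{i}\mathbb{Z}$. So only finitely many members of $\mathcal{S}_{G}$ could be quotients of $T$, contradicting the choice of $\mathcal{F}$; hence $\dim\underline{G}\geq 1$. Now suppose, for a contradiction, that only finitely many isomorphism types appear among $\{\,\underline{G}^{\overline{F}} : F \in \mathcal{F}\,\}$. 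Then some finite group $Q$ satisfies $\underline{G}^{\overline{F}} \cong Q$ for all $F$ in an infinite subset $\mathcal{F}'\subseteq\mathcal{F}$. The $G^{F}$ $(F\in\mathcal{F}')$ are still pairwise non-isomorphic, so $\{\,|G^{F}| : F \in \mathcal{F}'\,\}$ is infinite. On the other hand, since $\dim\underline{G}\geq 1$, the order formula for $\underline{G}^{\overline{F}}$ shows $|\underline{G}^{\overline{F}}| \to \infty$ as $q(F)\to\infty$, so $|\underline{G}^{\overline{F}}| = |Q|$ forces $q(F)$ to be bounded on $\mathcal{F}'$; since $Z$ is a torus, $|Z^{F}|$ is bounded by a fixed polynomial in $q(F)$, hence also bounded on $\mathcal{F}'$, and therefore $|G^{F}| = |Z^{F}|\cdot|Q|$ is bounded on $\mathcal{F}'$ --- a contradiction.

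The first two paragraphs are routine bookkeeping with the cited structure theory; the crux is the last step, ruling out a collapse of the infinite family under passage to $\underline{G}$. The decisive points are that $|\underline{G}^{\overline{F}}|$ grows with $q(F)$ once $\dim\underline{G}\geq 1$, while $|Z^{F}|$ is controlled by the same parameter, together with the elementary fact that $T$ has only finitely many abelian quotients, which disposes of the degenerate case $\underline{G}=1$.
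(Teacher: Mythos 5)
Your proof is correct, and its overall skeleton (descend $F$ to $\underline{G}$, observe $\underline{G}^{\overline{F}} \cong G^{F}/Z^{F}$ is again a quotient of $T$, then rule out a collapse to finitely many isomorphism types) matches the paper's. But the crucial non-collapsing step is handled by a genuinely different mechanism. The paper bounds $|Z^{F}|$ \emph{absolutely}, independently of $F$: after reducing modulo the finite group $[G,G]\cap Z$ one may assume $G = [G,G]\times Z$, whence $Z^{F}$ is a direct factor of $G^{F}$ and hence itself a quotient of $T$; being abelian it is a quotient of the finite group $T^{\mathrm{ab}}$, so $|Z^{F}| \le a_1\cdots a_n$ up to a constant depending only on the Lie type. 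You instead argue by contradiction through the order formulas: if the $\underline{G}^{\overline{F}}$ fell into finitely many isomorphism classes then $|\underline{G}^{\overline{F}}|$ would be bounded, forcing $q(F)$ bounded (since $\dim\underline{G}\ge 1$ after your correct disposal of the torus case via finiteness of $T^{\mathrm{ab}}$), hence $|Z^{F}| \le (q(F)+1)^{\dim Z}$ bounded, hence $|G^{F}| = |Z^{F}|\,|\underline{G}^{\overline{F}}|$ bounded --- contradicting the existence of infinitely many pairwise non-isomorphic $G^{F}$. Your route avoids the structural reduction to $[G,G]\cap Z = 1$ and uses the fact that $T$ has a finite abelianization only in the degenerate case, at the cost of invoking the growth estimates $|H^{F}| \ge (q-1)^{\dim H}$ for connected $H$ and the compatibility $q(\overline{F}) = q(F)$; the paper's route yields the stronger, cleaner statement that $|Z^{F}|$ is uniformly bounded in terms of $G$ and $(a_1,\ldots,a_n)$ alone, with no appeal to order formulas. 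Both arguments are complete.
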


\proof We claim that if $F$ is a Frobenius morphism, then the order of $Z^{F}$ is bounded in terms of $G$ and $(a_1,\ldots,a_n)$ but independently of $F$; this implies that an infinite sequence of distinct groups of the form $G^{F}$ gives rise to an infinite sequence of distinct groups of the form $\underline{G}^{F}$.

The group $[G,G] \cap Z \le Z([G,G])$ has finite order bounded by the Lie type of $[G,G]$. The quotient by this subgroup sends $Z$ onto a connected central subgroup of $G/([G,G] \cap Z)$, whose fixed points under the induced Frobenius morphism contain the image of $Z^{F}$. Thus it suffices to prove the claim assuming that $[G,G] \cap Z = 1$, so that $G = [G,G] \times Z$. But in this case, if $G^{F}$ is a quotient of $T(a_1,\ldots,a_n)$ then so is $Z^{F}$, and $|Z^{F}|$ is then bounded by the size of the abelianization of $T(a_1,\ldots,a_n)$, which is bounded in turn by $a_1 a_2 \ldots a_n$. \qed

Thus in the setting of Theorem~\ref{thm:triangle_and_more} it suffices to prove that only finitely many members of $\mathcal{S}_{\underline{G}}$ are images of $T(a_1,\ldots,a_n)$. Note that the dimension of a conjugacy class of $G$ is equal to the dimension of its image in $\underline{G}$.

Next, let $\widetilde{G}$ be the simply-connected cover of $\underline{G}$, with canonical projection $\pi\colon \widetilde{G}\to \underline{G}$. Then $\pi$ gives rise to an obvious map $\widetilde{G}^{n} \to \underline{G}^{n}$ which we also denote by $\pi$, abusing notation. We identify $\Hom(T(a_1,a_2,\ldots,a_n), \underline{G})$ with the set of $n$-tuples $\mathbf{h} = (h_1,h_2,\ldots,h_n) \in \underline{G}^{n}$ such that $h_i^{a_i} = 1$ for each $i$ and $h_1 h_2 \ldots h_n = 1$. Then $\pi^{-1}(\Hom(T(a_1,a_2,\ldots,a_n), \underline{G}))$ is a set of $n$-tuples in $\widetilde{G}^{n}$ of elements whose product lies in $Z(\widetilde{G})$. We identify these with $(n+1)$-tuples $(\widetilde{h_{1}},\widetilde{h_{2}},\ldots,\widetilde{h_n},\widetilde{h_{n+1}}) \in \widetilde{G}^{n+1}$ such that $\widetilde{h_{1}}\widetilde{h_{2}} \ldots \widetilde{h_n}\widetilde{h_{n+1}} = 1$ and $\widetilde{h_{n+1}} \in Z(\widetilde{G})$. Since $Z(\widetilde{G})$ is finite, the set of such $(n+1)$-tuples occurring this way is contained in a finite union of subsets of the form $\Hom(T(\widetilde{a_1},\widetilde{a_1},\ldots,\widetilde{a_n},\widetilde{a_{n+1}}), \widetilde{G})$ for integers $\widetilde{a_{i}}$ such that $\widetilde{a_{n+1}} \le |Z(\widetilde{G})|$ and $a_{i} \le \widetilde{a_i} \le |Z(\widetilde{G})|a_{i}$ for $1 \le i \le n$. Moreover the dimension of the conjugacy class $\widetilde{G} \cdot h_{i}$ equals that of $\underline{G} \cdot h_{i}$ for $1 \le i \le n$, and is $0$ for $i = n + 1$.

The Steinberg endomorphism $F$ is induced from a Steinberg endomorphism of $\widetilde{G}$ (cf.\ \cite[Proposition~22.7]{Malle2011}), and $\pi^{-1}(\underline{G}^{F})$ contains (but may be strictly larger than) the subgroup $\widetilde{G}^{F}$. The image $\pi(\widetilde{G}^{F})$ has index in $\underline{G}^{F}$ equal to $Z(\widetilde{G}^{F})$ (see \cite[\S 24.1]{Malle2011}), in particular this index is bounded by $|Z(\widetilde{G})|$, independently of $F$.

So suppose infinitely many members of $\mathcal{S}_{G}$ are images of $T(a_1,\ldots,a_n)$. Taking the image in $\underline{G}$ and then lifting to $\widetilde{G}$, by the above discussion we obtain infinitely many distinct subgroups of $\widetilde{G}$ which are generated by an $(n+1)$-tuple of elements in $\Hom(T(\widetilde{a_1},\ldots,\widetilde{a_n},\widetilde{a_{n+1}}),\widetilde{G}) \cap (C_1 \times \ldots \times C_n,C_{n+1})$, where $C_{i}$ are conjugacy classes of $\widetilde{G}$ with $\sum_{i = 1}^{n+1} \dim C_{i} \le \sum_{i = 1}^{n} j_{a_i}$. Each of the subgroups occurring this way contains a subgroup of the form $\widetilde{G}^{F}$, with index bounded independently of $F$.

It therefore suffices show the following.

\begin{lem}
Let $G$ be a simply-connected semisimple group and let $\mathcal{S}$ be a collection of subgroups of $G$, such that each member of $\mathcal{S}$ contains a subgroup of the form $G^{F}$ for a Steinberg endomorphism $F$ of $G$, and such that for any infinite sequence of pairwise distinct members of $\mathcal{S}$, the corresponding sequence of subgroups $G^{F}$ has unbounded order.

If $C_1,\ldots,C_n$ are conjugacy classes of $G$ such that $\sum \dim C_{i} \le 2 \dim G$, then only finitely many members of $\mathcal{S}$ are generated by elements in an $n$-tuple in the set
\[ \Hom(T(a_1,\ldots,a_n),G) \cap (C_1 \times \ldots \times C_n). \]
\end{lem}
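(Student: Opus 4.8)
The plan is to reduce the statement to Theorem~\ref{thm:general} and then combine the resulting rigidity with an elementary finiteness fact about finitely generated matrix groups over $\ovl{\F_p}$. Since $G$ admits a Steinberg endomorphism, the ground field $k$ has characteristic $p>0$, and we may take $k=\ovl{\F_p}$. I would begin with a harmless reduction: if the set $\Hom(T(a_1,\ldots,a_n),G)\cap(C_1\times\cdots\times C_n)$ is empty there is nothing to prove, so assume it is not. Then some, hence every, element of each $C_i$ has order dividing $a_i$, so for any tuple in $\C=C_1\times\cdots\times C_n$ the relations $x_i^{a_i}=1$ hold automatically, and the above set coincides with $\C_0=\{(c_1,\ldots,c_n)\in\C\mid c_1\cdots c_n=1\}$. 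Thus it suffices to show that only finitely many $M\in\mathcal S$ can be written as $M=\langle c_1,\ldots,c_n\rangle$ with $\rho:=(c_1,\ldots,c_n)\in\C_0$.

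Next I would check that Theorem~\ref{thm:general} applies to each such $\rho$. Given $M$ as above, the hypothesis furnishes a Steinberg endomorphism $F$ with $G^F\le M$; since $G$ is connected, $G^F$ is Zariski-dense in $G$, so $\ovl M=G$. Hence $H:=\ovl{\langle\rho\rangle}=G$ lies in no proper parabolic, giving $\rho\in(\C_0)_{\rm ir}$, and $\mathfrak g_H=\mathfrak g_G=0$ because $G$ is semisimple (the adjoint module has no nonzero trivial $G$-module quotient). So the hypotheses of Theorem~\ref{thm:general} are met by $\rho$, and as $\sum_i\dim C_i\le 2\dim G$ is precisely condition (c), the theorem forces condition (a): $\rho$ is rigid, i.e.\ $G\cdot\rho$ is open in $\C_0$. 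This conclusion holds for every generating tuple arising from any member of $\mathcal S$.

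The crux comes next. As a variety, $\C_0$ has only finitely many irreducible components, and any open $G$-orbit, being irreducible, is open and dense in exactly one component, so distinct open orbits have distinct closures; hence $\C_0$ contains only finitely many open $G$-orbits $\mathcal O_1,\ldots,\mathcal O_t$, and by the previous step every relevant generating tuple lies in one of them. Fix $j$, pick $\sigma_j\in\mathcal O_j$, and set $M_j=\langle\sigma_j\rangle$. If $\rho\in\mathcal O_j$ generates $M$, then $\rho=g\cdot\sigma_j$ for some $g\in G$ (conjugating coordinate-wise), so $M=gM_jg^{-1}$; in particular $|M|=|M_j|$. But $M_j$ is generated by the finitely many coordinates of $\sigma_j$, i.e.\ by finitely many matrices (in a fixed embedding $G\hookrightarrow \mathrm{GL}_N$) whose entries generate a single finite subfield of $\ovl{\F_p}$, so $M_j\le \mathrm{GL}_N(\F_{p^{m_j}})$ is finite. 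Now suppose, for contradiction, that infinitely many $M\in\mathcal S$ are generated by tuples in $\C_0$; by the pigeonhole principle infinitely many pairwise distinct such $M$ have their generating tuple in one fixed $\mathcal O_j$, so each of them has order $|M_j|$ and contains some $G^F$, whence the associated sequence of groups $G^F$ has order bounded by $|M_j|$. This contradicts the defining property of $\mathcal S$, and the lemma follows.

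I expect the main difficulty to be exactly this passage from orbits to subgroups: Theorem~\ref{thm:general} controls the generating tuples only up to $G$-conjugacy, and a single orbit can a priori correspond to infinitely many pairwise distinct (conjugate) subgroups of $G$. What rescues the argument is that all these conjugate subgroups are finite of the same order, because a finitely generated subgroup of $G(\ovl{\F_p})$ already lies in $G(\F_{p^m})$ for some $m$; the hypothesis that an infinite family of distinct members of $\mathcal S$ forces $|G^F|\to\infty$ then yields the contradiction. A secondary point worth stating carefully is the vanishing $\mathfrak g_G=0$ for semisimple simply connected $G$, which is what makes Theorem~\ref{thm:general} genuinely applicable here; this can fail in the non-simply-connected case (for instance for $\mathrm{PGL}_p$ in characteristic $p$, where the trace is a nonzero invariant functional on $\mathfrak g$), so the earlier reduction to simply connected groups is used essentially.
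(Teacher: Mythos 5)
There is a genuine gap, and it sits at the step where you verify the hypotheses of Theorem~\ref{thm:general}. You write that ``since $G$ is connected, $G^F$ is Zariski-dense in $G$, so $\overline{M}=G$.'' This is false: for a Steinberg endomorphism $F$ the group $G^F$ is \emph{finite}, and a finite subset of a variety is Zariski-closed, so $\overline{G^F}=G^F\neq G$ whenever $\dim G>0$. Likewise $M=\langle c_1,\ldots,c_n\rangle$ is a finite subgroup (as you yourself note later, it lies in some $\mathrm{GL}_N(\F_{p^m})$), so $H=\overline{\langle c_1,\ldots,c_n\rangle}=M$ is finite, not $G$. Consequently the two hypotheses of Theorem~\ref{thm:general} --- that $H$ is $G$-irreducible and that $\mathfrak g_H=0$ --- are statements about a finite subgroup, and they are the real content of the lemma. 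They can genuinely fail when $G^F$ is small (this is precisely why the lemma carries the hypothesis that distinct members of $\mathcal S$ force $|G^F|\to\infty$). The paper's proof spends most of its length on exactly this point: it shows that for all but finitely many $F$ the restriction of the composition factors of $\mathfrak g$ to $G^F$ stays irreducible \cite[Theorem~2.11]{Humphreys2005}, then compares cohomology of $G^F$ with that of $G$ \cite[Theorem~7.1]{Cline1977} and uses \cite[Proposition~1.4]{Liebeck1998} to deduce that every $G^F$-submodule of $\mathfrak g$ is a $G$-submodule, whence $\mathfrak g_{G^F}=0$ and $\mathfrak g_H=0$ for any $H\supseteq G^F$; a separate argument gives $G$-irreducibility of all but finitely many $G^F$. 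None of this can be bypassed by a density argument, so your proof is missing its central ingredient.

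The remainder of your argument is sound and in fact slightly more careful than the paper's. The reduction of $\Hom(T(a_1,\ldots,a_n),G)\cap\C$ to $\C_0$ is fine, and your endgame --- finitely many open orbits because $\C_0$ has finitely many components, together with the observation that all subgroups generated by tuples in a single orbit are conjugate and hence of equal (finite) order, so the unbounded-order hypothesis on $\mathcal S$ kills any infinite family --- correctly handles the passage from orbits to subgroups, a point the paper's own conclusion treats rather tersely. But that refinement does not repair the earlier error: until you establish, for all but finitely many members of $\mathcal S$, that the relevant finite subgroup is $G$-irreducible with vanishing coinvariants on $\mathfrak g$, Theorem~\ref{thm:general} does not apply and no rigidity is available.
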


\proof We claim that all but finitely many subgroups $H \in \mathcal{S}$ are $G$-irreducible and satisfy $\mathfrak{g}_{H} = 0$, which allows us to apply Theorem~\ref{thm:general}. By the hypothesis on $\mathcal{S}$, it suffices to prove this for the subgroups $G^{F}$ themselves, for if $G^{F} \le H$ and $G^{F}$ is $G$-irreducible and has no coinvariants on $\mathfrak{g}$, then this also holds for $H$.

The hypothesis on $G$ implies that $G$ is the direct product of its simple factors, and its Lie algebra $\mathfrak{g}$ is the direct sum of the Lie algebras of these factors. Moreover the Lie algebra of a simply-connected simple group has no coinvariants (see for instance \cite[Table 1]{Hogeweij1982}). Thus $\mathfrak{g}_{G} = 0$. Next, let $F$ be a Steinberg endomorphism of $G$, and consider the image of $G^{F}$ under projection to some direct factor $G_{1}$ of $G$. From the classification of Steinberg endomorphisms of semisimple groups (cf.\ \cite[Corollary 1.5.16]{Geck2016}) it follows that this image is a finite group of Lie type, in fact it is equal to $G_1^{F^{m}}$ for some $m \ge 1$ such that $F^{m}(G_1) = G_1$. Thus if $\mathfrak{g}_{G^F} \neq 0$ for infinitely many pairwise non-isomorphic subgroups $G^{F}$, then there exists a simple factor of $G$ (say $G_1$ without loss of generality) and an infinite collection of subgroups $G_1^{F_1}$, $G_1^{F_2}$, $\ldots$ for Steinberg endomorphisms $F_i$ of $G_1$, such that $G_{1}^{F_i}$ has coinvariants on the Lie algebra $\mathfrak{g}_{1}$ of $G_1$ for all $i$. On the other hand, for all but finitely many $F_{i}$, the $G_{1}$-module composition factors of $\mathfrak{g}_{1}$ all remain irreducible upon restriction to $G_{1}^{F_{i}}$ \cite[Theorem~2.11]{Humphreys2005}, with non-isomorphic factors remaining non-isomorphic. Then results relating cohomology of $G_{1}^{F_{i}}$ to that of $G_{1}$ \cite[Theorem~7.1]{Cline1977}, combined with results relating group cohomology to submodule structure \cite[Proposition~1.4]{Liebeck1998}, tell us that for all but finitely many values of $i$, each $G_{1}^{F_{i}}$-submodule of $\mathfrak{g}_{1}$ is a $G_{1}$-module. In particular, this implies that $G$ has coinvariants on its Lie algebra; a contradiction.

Note also that, again with finitely many exceptions, the group $G_{1}^{F_{i}}$ is quasi-simple and does not map non-trivially into a proper Levi subgroup of $G_1$; it follows that with finitely many exceptions, a group of Lie type $G^{F}$ is $G$-irreducible.

We have therefore shown that, with finitely many exceptions, the subgroups $H \in \mathcal{S}$ are $G$-irreducible and satisfy $\mathfrak{g}_{H} = 0$. Let $\mathcal{S}'$ denote the subset of $\mathcal{S}$ satisfying these conditions. If $H \in \mathcal{S}'$ and $\rho = (h_1,\ldots,h_n) \in \Hom(T(a_1,a_2,\ldots,a_n),G)$ such that $\rho(T(a_1,a_2,\ldots,a_n)) = H$, we may regard $\rho$ as an element of $(\C_0)_{\rm ir}$ where we choose $C_i= G\cdot h_i$ for $i = 1,\ldots,n$. By Theorem~\ref{thm:general}, $\rho$ is rigid. Hence if $H_1$, $H_2 \in \mathcal{S}'$ and $\rho_1$, $\rho_2 \in \Hom(T(a_1,a_2,\ldots,a_n), G)$ have respective images $H_1$ and $H_2$, then $G \cdot \rho_1$ and $G\cdot \rho_2$ are nonempty disjoint open subsets of $\C_0$, hence cannot both be contained in the same irreducible component. But the variety $\C_0$ has only finitely many irreducible components, and the desired result follows. \qed


\section{Tangent spaces and 1-cocycles}

In this section we prove Theorem~\ref{thm:general}. Let $H$ be a closed subgroup of $G$, and recall that $D_H(\mathfrak{g})= \{X- h\cdot X\mid h\in H\}$.

\begin{lem}
\label{lem:coinvts}
Suppose that $H= \overline{\langle h_1,\ldots, h_n\rangle}$ for some $h_1,\ldots, h_n\in H$. Then $D_H(\mathfrak{g})$ is the subspace spanned by $\{X- h_i\cdot X\mid X\in \mathfrak{g}, 1\leq i\leq n\}$.
\end{lem}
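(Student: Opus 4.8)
The plan is to fix the subspace
\[
W \;=\; \sum_{i=1}^{n}\bigl(\mathrm{id}-\Ad_{h_i}\bigr)(\mathfrak{g})\;=\;\operatorname{span}_{k}\{X - h_i\cdot X \mid X\in\mathfrak{g},\ 1\le i\le n\}
\]
and to prove that $D_H(\mathfrak{g}) = W$. One inclusion is immediate: every spanning vector $X - h_i\cdot X$ has the form $X - h\cdot X$ with $h = h_i\in H$, so $W\subseteq D_H(\mathfrak{g})$. Everything below is directed at the reverse inclusion $D_H(\mathfrak{g})\subseteq W$, i.e.\ at showing $X - h\cdot X\in W$ for every $h\in H$ and $X\in\mathfrak{g}$. (In the language of coinvariants this is just the assertion that the augmentation ideal of the group ring of $\langle h_1,\dots,h_n\rangle$ acts on $\mathfrak{g}$ with image $W$, and that enlarging $\langle h_1,\dots,h_n\rangle$ to its Zariski closure $H$ does not change this image.)

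The first and only substantive step is to check that $W$ is stable under $\Ad_{h_j}$ for each $j$. This follows from the trivial identity $\Ad_{h_j}(Z) = Z - \bigl(Z - \Ad_{h_j}(Z)\bigr)$: if $Z\in W$, the first term lies in $W$ by hypothesis and the second lies in $(\mathrm{id}-\Ad_{h_j})(\mathfrak{g})\subseteq W$, so $\Ad_{h_j}(Z)\in W$. Since $\Ad_{h_j}$ is invertible and $W$ is finite-dimensional, $\Ad_{h_j}(W)\subseteq W$ forces $\Ad_{h_j}(W) = W$. Hence each $h_j$ lies in the set $\{g\in G : \Ad_g(W) = W\}$, which is a closed subgroup of $G$ — it is the stabiliser of the point $[W]$ for the natural morphic action of $G$ on the Grassmannian of $(\dim W)$-dimensional subspaces of $\mathfrak{g}$ — so this closed subgroup contains the group $\langle h_1,\dots,h_n\rangle$ and therefore its Zariski closure $H$. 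In particular $W$ is an $H$-submodule of $\mathfrak{g}$.

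It remains to deduce that $H$ acts trivially on the quotient $H$-module $\mathfrak{g}/W$. Each $h_i$ does, because $(\mathrm{id}-\Ad_{h_i})(\mathfrak{g})\subseteq W$; so the kernel of the morphism of algebraic groups $H\to\mathrm{GL}(\mathfrak{g}/W)$ is a closed subgroup of $H$ containing every $h_i$, hence containing $\langle h_1,\dots,h_n\rangle$ and hence all of $H$. Thus $\Ad_h(X) - X\in W$ for all $h\in H$ and $X\in\mathfrak{g}$, which gives $D_H(\mathfrak{g})\subseteq W$, and the proof is complete.

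I do not expect a serious obstacle: the content is concentrated in the observation that $W$ is already closed under the generators $h_j$ themselves, not merely under the monoid they generate, where invertibility together with finite-dimensionality is used to promote $\Ad_{h_j}(W)\subseteq W$ to equality; after that, both appeals to the Zariski closure of $\langle h_1,\dots,h_n\rangle$ are instances of the routine principle that a subgroup condition cut out by regular functions defines a closed subgroup, which must then contain the closure of any subgroup it contains. The one point to take care over is precisely this closing-up step, so that the coinvariants being computed are those of the genuine (closed) group $H$ rather than of the dense subgroup $\langle h_1,\dots,h_n\rangle$.
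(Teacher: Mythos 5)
Your proof is correct and complete, but it is organised differently from the paper's. The paper proves the inclusion $D_H(\mathfrak{g})\subseteq W$ by explicit induction on the length of words in the $h_i$, using the telescoping identity $X-h_ih'\cdot X=(X-h'\cdot X)+\bigl(h'\cdot X-h_i\cdot(h'\cdot X)\bigr)$ to conclude that $X-h\cdot X\in W$ for every $h$ in the abstract subgroup $\langle h_1,\ldots,h_n\rangle$, and then disposes of the passage to the Zariski closure with a one-line ``the result now follows''. You instead prove that $W$ is an $H$-submodule and that the kernel of $H\to \mathrm{GL}(\mathfrak{g}/W)$ is a closed subgroup containing every $h_i$, hence all of $H$. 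The underlying algebraic identity is the same in both arguments, but your packaging buys two things that the paper leaves implicit: the treatment of inverses (the paper's induction as written only covers positive words in the $h_i$, and one must separately observe that $X-h_i^{-1}\cdot X=-(Y-h_i\cdot Y)$ for $Y=h_i^{-1}\cdot X$ lies in $W$), and the closing-up step from $\langle h_1,\ldots,h_n\rangle$ to $H$, which in your version is an instance of the explicitly stated principle that a subgroup condition cut out by regular functions is closed. The cost is the small extra verification that $W$ is $\Ad_{h_j}$-stable, which the paper's direct induction does not need; this is where your use of invertibility plus finite-dimensionality to upgrade $\Ad_{h_j}(W)\subseteq W$ to equality enters, and it is handled correctly.
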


\begin{proof}
 Let $V$ be the subspace $\{X- h_i\cdot X\mid X\in \mathfrak{g}, 1\leq i\leq n\}$.  Let $m \in \N$.  Suppose $X- h\cdot X\in V$ for all $X\in \mathfrak{g}$ and all $h\in H$ such that $h$ is a word in the $h_i$ of length at most $m$.  Let $h$ be a word in the $h_i$ of length $m+1$ and let $X\in \mathfrak{g}$.  We can write $h= h_i h'$ for some $1\leq i\leq m$ and some $h'$ that is a word of length $m$ in the $h_i$.  Then $X- h\cdot X= X- h_ih'\cdot X= X- h'\cdot X+ h'\cdot X- h_ih'\cdot X= X- h'\cdot X+ h'\cdot X- h_i\cdot (h'\cdot X)\in V$.  It follows by induction on $m$ that $X- h\cdot X\in V$ for every $h\in \langle h_1,\ldots, h_n\rangle$. The result now follows.
\end{proof}

We recall some facts from the theory of 1-cocycles and representation varieties, see for instance \cite{Weil1964}. Let $\rho\in \Hom(\Gamma,G)$.  The group $\Gamma$ acts on $\mathfrak{g}$ by $\gamma\cdot X= \Ad_{\rho(\gamma)}(X)$; we denote the resulting $\Gamma$-module by $\mathfrak{g}(\rho)$.  Recall that a {\em 1-cocycle} is a function $\alpha\colon \Gamma\to \mathfrak{g}$ such that $\alpha(\gamma\gamma')= \alpha(\gamma)+ \gamma\cdot \alpha(\gamma')$.  We define $Z^1(\Gamma, \mathfrak{g}(\rho))$ to be the set of 1-cocycles; this is a vector space over $k$ with respect to the obvious pointwise operations.  There is a unique linear map $A_\rho$ from the tangent space $T_\rho(\Hom(\Gamma,G))$ to $Z^1(\Gamma, \mathfrak{g}(\rho))$ such that if $X= (X_1,\ldots, X_n)\in G^n$ then $A_\rho(X)(\gamma_i)= dR_{g_i}^{-1}(X_i)$ for $1\leq i\leq n$, where $g_i \deq \rho(\gamma_i)$ and $R_g\colon G\to G$ is multiplication by $g$.  Moreover, if $\gamma\in \Gamma$, $X\in T_\rho(\Hom(\Gamma,G))$ and $\alpha= A_\rho(X)$ then $d(\ev_\gamma)(X)= dR_g(\alpha(\gamma))$.

Now consider the special case $\Gamma= F_n= \langle \gamma_1,\ldots, \gamma_n\rangle$. Then for conjugacy classes $C_1,\ldots,C_n$, with $\C = C_1 \times \ldots \times C_n$ as before, the product map $f \colon C_1 \times \cdots \times C_n \to G$ coincides with $\ev_\gamma$, where $\gamma= \gamma_1\ldots \gamma_n$.  Let $c\in \C_0 = f^{-1}(\{1\})$.  Set $c_1'= 1$ and $c_i'= c_1\ldots c_{i-1}$ for $1\leq i\leq n$.  For any $X\in T_\rho(\Hom(\Gamma,G))$,
$$ df_\rho(X)= c_1'\cdot \alpha(\gamma_1)+\cdots  +c_n'\cdot \alpha(\gamma_n)= c_1'\cdot X_1+\cdots  +c_n'\cdot X_n. $$
Define $h\colon G^n\to \C$ by $h(g_1,\ldots, g_n)= (g_1c_1g_1^{-1},\ldots, g_nc_ng_n^{-1})$.  Let $(Y_1,\ldots, Y_n)\in \mathfrak{g}^n$.  It is easily seen that $dh_1(Y_1,\ldots, Y_n)= (dR_{c_1}(Y_1- c_1\cdot Y_1),\ldots, dR_{c_n}(Y_n- c_n\cdot Y_n))$.  We see that the derivative of $f\circ h$ is given by
\begin{eqnarray*}
 d(f\circ h) & = & c_1'\cdot (Y_1- c_1\cdot Y_1)+\cdots  +c_n'\cdot (Y_n- c_n\cdot Y_n) \\
  & = & (c_1'\cdot Y_1)- (c_1'c_1(c_1')^{-1})\cdot (c_1'\cdot Y_1)+\cdots +(c_n'\cdot Y_n)- (c_n'c_n(c_n')^{-1})\cdot (c_n'\cdot Y_n).
\end{eqnarray*}

\begin{proof}[Proof of Theorem~\ref{thm:general}]
 Let $V\subseteq \mathfrak{g}$ be the image of $df$.  Let $H$ be the subgroup of $G$ generated by $c_1,\ldots, c_n$.  Then $H$ is also the subgroup generated by $c_1'c_1(c_1')^{-1}, \ldots, c_n'c_n(c_n')^{-1}$.  It follows from Lemma~\ref{lem:coinvts} and the previous displayed equation that $V= D_H(\mathfrak{g})$.  By hypothesis, $\mathfrak{g}_H= 0$, so ${\rm im}(df)= D_H(\mathfrak{g})= \mathfrak{g}$.  This shows that $df$ has rank equal to $\dim(G)$.  Since $\rho$ is a smooth point of $\C$, it follows that $df$ has rank equal to $\dim(G)$ at all points in an open neighbourhood of $\rho$.  We deduce that ${\rm im}(f)$ is dense in $G$ and $f$ is smooth at $\rho$; in particular, any irreducible component of $\C_0$ that contains $\rho$ has dimension $\dim(\C)- \dim(G)$.
 
 Let $\D$ be an irreducible component of $\C_0$ that contains $G^{\circ}\cdot \rho$.  Then $\dim(G^{\circ}\cdot \rho)= \dim(G)$ as $\rho$ belongs to $(\C_0)_{\rm ir}$, so $\dim(\D)\geq \dim(G)$.  But $\dim(\D)= \dim(\C)- \dim(G)$ from the previous paragraph, so $\dim(\C)\geq 2\dim(G)$, with equality if and only if $\dim(G)= \dim(\D)$, which is the case if and only if $\rho$ is rigid. The result follows.
%
\end{proof}

\bibliographystyle{amsplain}
\bibliography{triangle-may-7}

\end{document}